\numberwithin{equation}{section}
\theoremstyle{plain}
\newtheorem{theorem}{Theorem}[section]
\newtheorem{proposition}[theorem]{Proposition}
\newtheorem{corollary}[theorem]{Corollary}
\newtheorem{definition}[theorem]{Definition}
\newtheorem{example}[theorem]{Example}
\newtheorem{remark}[theorem]{Remark}
\newenvironment{proof}{{\noindent \textbf{Proof}\,\,}}{\hspace*{\fill}$\Box$\medskip}
\title{Total positivity, Grassmannian and modified Bessel functions}
\author{V.M.Buchstaber\thanks{Permanent address: Steklov Mathematical Institute, 8, Gubkina street, 119991, Moscow, Russia.  Email: buchstab@mi.ras.ru}
\thanks{All-Russian Scientific Research Institute for Physical and Radio-Technical Measurements (VNIIFTRI),}
\thanks{Supported by part by RFBR grant 14-01-00506.}, 
A.A.Glutsyuk\thanks{ CNRS, France (UMR 5669 (UMPA, ENS de Lyon) and Interdisciplinary Scientific Center J.-V.Poncelet). 
Email:
aglutsyu@ens-lyon.fr}, 
\thanks{National Research University Higher School of Economics (HSE), Moscow, Russia}
 \thanks{Supported by part by RFBR grants 16-01-00748, 16-01-00766.}}
\begin{document}
\maketitle
\def\zz{\mathbb Z}
\def\nn{\mathbb N}
\def\var{\varepsilon}
\def\td{\mathbb T^3} 
\def\rr{\mathbb R}
\def\la{\lambda}
\def\go#1{\EuFrak#1}
\def\wt{\widetilde}
\def\sign{\operatorname{sign}}
\def\cc{\mathbb C}
\def\diag{\operatorname{diag}}
\def\dd{\Delta_{discr}}

\centerline{\it To Selim Grigorievich Krein with gratitude for his}
\centerline{\it mathematical results and Voronezh Winter Mathematical Schools}

\begin{abstract} A rectangular matrix is called {\it  totally positive,}  (according to F.R.Gantmacher and M.G.Krein) if all its minors are positive. 
A point of a real Grassmanian manifold $G_{l,m}$ of $l$-dimensional subspaces in $\rr^m$ is called {\it strictly totally positive} 
(according to A.E.Postnikov) if one can normalize its Pl\"ucker coordinates to make all of them positive. 
Clearly if a $k\times m$-matrix, $k<m$, is totally positive, then each collection of its $l\leq k$ rows generates an $l$-subspace 
represented by a strictly totally positive point of the Grassmanian manifold $G_{l,m}$. 
The totally positive matrices and the strictly 
totally positive Grassmanians, that is, the subsets of strictly totally positive points in  Grassmanian manifolds 
arise in many domains of mathematics, mechanics and physics. F.R.Gantmacher and M.G.Krein 
considered totally positive matrices in the context of classical mechanics.  S.Karlin considered them in a wide context of analysis, 
differential equations and probability theory. In his well-known book "Total positivity" he provides a construction of totally positive 
matrices coming from Green functions of linear differential operators, including famous class of Sturm--Liouville operators. Total positivity was used for construction of solutions of the  Kadomtsev-Petviashvili (KP) 
partial differential equation in a paper  by T.M.Malanyuk, 
 in a joint paper by M.Boiti, F.Pemperini and A.Pogrebkov and in a joint paper of Y.Kodama and L.Williams.  
 Different problems of mathematics, mechanics and physics led to constructions of totally positive matrices 
 due to many mathematicians, including F.R. Gantmacher, M.G.Krein, I.J.Schoenberg, S.Karlin, 
A.E.Postnikov and ourselves.  In our case totally positive matrices were constructed for solution of problems on model of the overdamped Josephson effect in superconductivity and double confluent Heun equations. 
In our previous paper we have proved that certain determinants formed by modified Bessel functions of the first 
kind are positive on the positive semi-axis. This yields a one-dimensional family of strictly totally positive points  in all the Grassmanian manifolds. 

In the present paper we provide a construction of multidimensional families of strictly totally positive points in all the Grassmanian manifolds, 
again using modified Bessel functions of the first kind but different from the above-mentioned construction. 
These families represent images of explicit injective mappings of the convex open 
subset $\{ x=(x_1,\dots,x_l)\in\rr^l \ | \ 0<x_1<\dots<x_l\}\subset\rr^l$ to the Grassmanian manifolds $G_{l,m}$, $l<m$. 

We present a new result  that gives totally positive matrices formed by values of 
modified Bessel functions with  non-negative integer indices. Their columns are numerated by the indices of the modified Bessel functions, 
and their rows are numerated by their arguments.

S.Karlin presented in his book constructions of totally non-negative matrices given in terms of either 
just one modified Bessel function $I_{\alpha}$, or Green functions of linear differential operators. 
We briefly describe Karlin's results and demonstrate that our results are not covered by his constructions.
\end{abstract}

\section{Introduction}
\subsection{Brief survey on totally positive matrices. Main result} 

The following notion was introduced in the classical books \cite{gantkr, gantkr2} in the context of the classical mechanics. 
\begin{definition} \cite{abgryn, gantkr, pincus}, \cite[p.289 of the russian edition]{gantkr2} A rectangular $l\times m$-matrix  is called {\it  totally positive} (nonnegative), 
if all its minors of all the dimensions  are positive (nonnegative). 
\end{definition}

\begin{example} It is known that every generalized Vandermonde matrix 
$$(f(x_i,y_j))_{i=1,\dots,m; \ j=1,\dots,n}, \ f(x,y)=x^y,$$
$$ 0<x_1<\dots<x_m, \ 0\leq y_1<y_2<\dots<y_n$$
is totally positive, see \cite[chapter XIII, section 8]{gant}.
\end{example}

The study of $n\times n$ matrices with positive elements goes back to Perron \cite{p} who had shown that for such a matrix 
the eigenvalue that is largest in the modulus is simple, real and positive, and the corresponding eigenvector can be normalized to have 
all the components positive (1907). Later in 1908 his result was generalized by G.Frobenius \cite[chapter 13, section 2]{gant} to  those 
matrices with non-negative coefficients that are block-non-decomposable.  
For each one of these matrices he had proved that  its complex eigenvalues of maximal modulus 
are roots of a polynomial $P(\la)=\la^h-r^h$, all of them are simple and  one of them is real and positive. 

In 1935--1937 F.R.Gantmacher and M.G.Krein \cite{gantkr-1, gantkr-0} 
observed that  if the matrix under question satisfies a stronger condition, that is, total positivity (in fact a weaker, 
oscillation property is sufficient), 
then {\it all} its eigenvalues are simple, real and positive. 

Earlier in 1930 I.Schoenberg \cite{schon} studied a more general class of 
 matrices  including totally positive ones: namely, the $m\times n$-matrices such that for 
 every $k\leq\min\{ m , n\}$ all the non-zero minors of  order $k$ have the same sign (either all positive, or all negative). 
 He proved important results relating the latter property with variation-diminishing property of the corresponding linear 
 transformations $\rr^n\to\rr^m$. (Recall that a linear  transformation  $\rr^n\to\rr^m$ is called 
 {\it variation-diminishing,} if it does not increase the number of sign changes in the sequence of coordinates of a vector.) 
   Further results in this direction were obtained by Motzkin in his dissertation in 1933 
 and most complete results were obtained by Gantmacher and Krein \cite{gantkr2}. 
 
 A  two-sided sequence $(a_j)_{j\in\zz}$ of real numbers is called {\it totally nonnegative (positive),} if the 
 infinite matrix $(a_{j-i})_{i,j\in\zz}$ is totally nonnegative (positive). There is a remarkable result on 
 characterization of totally nonnegative sequences. It says that for each totally nonnegative sequence 
 distinct from a geometric progression the corresponding generating function 
 $\sum_j a_jz^j$ converges as a Laurent series in some annulus, extends meromorphically to all of 
 $\cc^*$ and is a product of $\exp(q_1z+q_2z^{-1})$ times an infinite product of fractions of appropriate 
 linear functions; here  $q_1,q_2\geq0$.  In 1948 I.Schoenberg \cite{schon2} proved sufficience: total nonnegativity  of 
 Laurent series of the latter  functions. 
  The converse (necessity) was proved by A.Edrei in 1953 \cite{edrei}. 
 See also \cite[theorem 8.9.5]{karlin} and references in this books and in papers \cite{schon2, edrei}. 
Our result obtained in \cite{bg} is a  result  towards the characterization of {\it totally positive} two-sided sequences. 
In \cite{bg} we have proved total positivity of the sequence 
 of values $(I_j(x))_{j\in\zz}$ of the modified Bessel functions $I_j(z)$ of the first kind for every $x>0$. A weaker statement,  total 
 non-negativity follows from Schoenberg's  theorem \cite{schon2}. 
  
 The characterization of all the totally nonnegative two-sides sequences is a basic fundamental result used 
 in  the description of the characters of representations of the infinite unitary group $U(\infty)=\underrightarrow\lim U(n)$ 
 and the infinite symmetric group $S(\infty)=\underrightarrow\lim S(n)$. See papers by E.Toma \cite{thoma}, 
 D.Voiculescu \cite{voic},  joint papers by A.M.Vershik and S.V.Kerov \cite{vershik1, vershik2} and references therein. 
 
Many results on characterization and properties of  totally 
positive matrices and their relations to other domains of mathematics (e.g., combinatorics, dynamical systems, 
geometry and topology, probability theory, Fourier analysis, representation theory), mechanics and physics 
are given in \cite{abgryn, edrei, gant, gantkr-1, gantkr-0, gantkr, gantkr2, pincus, faljon, faljon2, gm96, karlin, lustig1, lustig2, lustig3, schon2, vershik1, vershik2} and in 
\cite{post, berfomzel, berzel, fomzel, gelf, fomin, pog, kw} (see also references in all these papers and books). 
F.R.Gantmacher and M.G.Krein 
\cite{gantkr, gantkr2} considered totally positive matrices in the context of applications to mechanical problems. S.Karlin \cite{karlin} 
considered them in a wide context of analysis, differential equations and probability theory.  In 2008 
G.Lusztig suggested an analogue of the theory of total positivity for the Lie group context  \cite{lustig1}. 

Total positivity was used  to construct solutions  of the  Kadomtsev-Petviashvili (KP) 
differential equation  in a paper by T.M.Malanyuk \cite{malan}, a joint paper of M.Boiti, F.Pempinelli, 
A.Pogrebkov \cite[section II]{pog}, and a joint paper of Y.Kodama and L.Williams \cite{kw}. 
Sergey Fomin's talk at the ICM-2010 
\cite{fomin} was devoted to deep relations between total positivity and cluster algebras. 
There exist several approaches of construction of totally positive matrices, see \cite{karlin, gantkr, post, bg}, 
\cite[p.290 of Russian edition]{gantkr2}. 
In the previous paper \cite{bg} we have constructed a class of explicit one-dimensional families of  totally positive matrices 
given by a finite collection of double-sided infinite vector functions, whose components are modified Bessel functions of the first kind. 
Matrices of such kind arised in a paper of V.M.Buchstaber and S.I.Tertychnyi in the construction of appropriate solutions on the non-linear differential equations in a model of overdamped Josephson junction in superconductivity, see \cite{bt2} and references therein. 
It was shown in \cite{bg} that the nature of the modified Bessel functions as coefficients of appropriate generating function 
implies that the infinite vector formed by appropriate minors of the above-mentioned matrices satisfies  
the {\it differential-difference heat equation} with positive constant potential in the Hilbert space $l_2$. 

In the present paper we provide a new construction of explicit multidimensional family of  totally positive matrices formed by a finite 
collection of one-sided infinite vector functions. This family is parametrized by a domain in $\rr^l$. Each row of the matrix 
corresponds to a coordinate $x_i$ in $\rr^l$, and its elements are modified Bessel functions of this coordinate.  

\begin{definition}  (see \cite[chapter 2, definition 1.1, p.46]{karlin}). 
A function $K(x,y)$ on a product of two totally ordered sets $X\times Y$ is called a {\it totally positive 
(strictly totally positive) kernel} of order $r\in\nn$, if for every $1\leq m\leq r$, $x_1<\dots<x_m$ in $X$ and 
$y_1<\dots<y_m$ in $Y$ the determinant of the matrix $(K(x_i,y_j))_{i,j=1}^m$ is nonnegative (respectively, positive). 
\end{definition}

Recall that the modified Bessel functions $I_j(y)$  of the first kind are Laurent series coefficients for the family of analytic functions
$$g_y(z)=e^{\frac y2(z+\frac1z)}=\sum_{j=-\infty}^{+\infty}I_j(y)z^j.$$
Equivalently, they are defined by the integral formulas  
$$I_j(y)=\frac1{\pi}\int_0^{\pi}e^{y\cos\phi}\cos(j\phi) d\phi, \ j\in\zz.$$ 

\begin{theorem} \label{pos0} For every $r\in\nn$ the function 
$$K(x,j)=I_j(x), \ \ j\in\zz_{\geq0}, \ x>0$$
is a strictly totally positive kernel of order $r$ with $X=\rr_+$ and $Y=\zz_{\geq0}$. 
\end{theorem}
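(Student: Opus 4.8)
The plan is to fix $m\le r$ together with a tuple $0\le j_1<\dots<j_m$ and to prove that the function
$\Delta(x_1,\dots,x_m)=\det\bigl(I_{j_l}(x_i)\bigr)_{i,l=1}^m$
is strictly positive on the connected open domain $\Omega=\{\,0<x_1<\dots<x_m\,\}\subset\rr^m$. Since $\Omega$ is connected and $\Delta$ is continuous, it suffices to (i) determine the sign of $\Delta$ on one approach to the boundary and (ii) show that $\Delta$ has no zero inside $\Omega$. For (i) I would use $I_j(x)=\frac{(x/2)^j}{j!}\bigl(1+O(x^2)\bigr)$ as $x\to0^+$: the leading term of $\Delta$ factors as $\prod_l\frac1{2^{j_l}j_l!}$ times the generalized Vandermonde determinant $\det\bigl(x_i^{j_l}\bigr)$, which is positive by the Example quoted above. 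Hence $\Delta>0$ for all $x_i$ small, and the whole theorem reduces to the non-vanishing statement (ii).

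To locate where the difficulty sits, I would describe the determinant in two ways. First, the integral formula for $I_j$ together with the basic composition (Cauchy--Binet) formula gives
\[
\Delta=\frac1{\pi^m}\int\limits_{0<\phi_1<\dots<\phi_m<\pi}\det\bigl(e^{x_i\cos\phi_s}\bigr)_{i,s}\,\det\bigl(\cos(j_l\phi_s)\bigr)_{s,l}\,d\phi_1\cdots d\phi_m ,
\]
where the first factor is sign-definite, because $e^{xu}$ is a strictly totally positive kernel and $\cos\phi_1>\dots>\cos\phi_m$ on the simplex. Second, iterating the ladder identity $\frac{d}{dx}\bigl(x^{-j}I_j\bigr)=x^{-j}I_{j+1}$ yields the exact factorization $I_j(x)=(2x)^j h^{(j)}(x^2)$ with $h(w)=I_0(\sqrt w)=\sum_{k\ge0}\frac{w^k}{4^k(k!)^2}$, equivalently $h(x^2+2xt)=\sum_{j\ge0}\frac{t^j}{j!}I_j(x)$.

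The main obstacle is precisely the second determinant $\det\bigl(\cos(j_l\phi_s)\bigr)$. Since $\cos(j\phi)=T_j(\cos\phi)$ is a polynomial of degree $j$ in $\cos\phi$, the Chebyshev family $\{T_{j_l}\}$ is not a totally positive system on $(-1,1)$: already for $j_1=0,\ j_2=2$ one has $\det\begin{pmatrix}1&T_2(u_1)\\1&T_2(u_2)\end{pmatrix}=2(u_2^2-u_1^2)$, which changes sign. Thus the integrand above genuinely changes sign, and $\Delta>0$ is a true cancellation phenomenon rather than pointwise positivity. The same effect appears in the ladder form: the kernel $h^{(j)}(w)$ carries the \emph{reversed} total-positivity orientation (for instance $\det\bigl(h^{(j_l)}(w_i)\bigr)<0$ when $m=2$, $j_1=0$, $j_2=1$, reflecting that $h$ lies in the Laguerre--P\'olya class, with zeros only at the negative reals $-j_{0,k}^2$, the $j_{0,k}$ being the positive zeros of the Bessel function $J_0$), and it is the forward strictly totally positive Vandermonde factor $(2x)^j=e^{j\log(2x)}$ that must exactly restore the correct orientation. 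I therefore expect the heart of the proof to rely on the specific arithmetic of the modified Bessel functions, not on a generic total-positivity closure.

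To carry out (ii) I would reformulate non-vanishing as the assertion that $\{I_{j_1},\dots,I_{j_m}\}$ is a Chebyshev system on $(0,\infty)$: a zero of $\Delta$ in $\Omega$ would produce a nontrivial combination $f(x)=\sum_l c_l I_{j_l}(x)$ with $m$ distinct positive zeros, which must be excluded. The case $m=2$ is the classical monotonicity of the ratio $I_{j_2}(x)/I_{j_1}(x)$, i.e. positivity of the Wronskian $I_{j_1}I_{j_2}'-I_{j_1}'I_{j_2}$, and I would try to propagate this to general $m$ by induction, using the three-term recurrence $I_{j+1}=I_{j-1}-\frac{2j}{x}I_j$ and the ladder relation to reduce the largest index and to compare the confluent Wronskians $W(I_{j_1},\dots,I_{j_m})(x)$ at successive orders. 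Establishing positivity of all these generalized Wronskians --- equivalently, sharpening the crude variation-diminishing bound (which only gives at most $j_m$ sign changes for $f$, since $g(\phi)=\sum_l c_l\cos(j_l\phi)$ is itself oscillatory) to the optimal bound $m-1$ --- is the step I expect to be the principal difficulty, and is exactly where the fine structure of $I_j$ has to be exploited.
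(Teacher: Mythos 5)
Your reduction is sound as far as it goes: since $\Omega=\{0<x_1<\dots<x_m\}$ is connected, positivity of $\Delta$ follows from (i) positivity at one point, which you correctly obtain from $I_j(x)=\frac{(x/2)^j}{j!}(1+O(x^2))$ and the positivity of the generalized Vandermonde determinant, plus (ii) non-vanishing of $\Delta$ on all of $\Omega$. The difficulty is that (ii) \emph{is} the theorem: non-vanishing of every such $\Delta$ is exactly the statement that $\{I_{j_1},\dots,I_{j_m}\}$ is a Chebyshev system on $\rr_+$, and you leave this unproved. Your diagnostic observations are accurate and genuinely informative --- the basic composition formula applied to $I_j(x)=\frac1\pi\int_0^\pi e^{x\cos\phi}\cos(j\phi)\,d\phi$ fails to give pointwise positivity because $\det(\cos(j_l\phi_s))$ changes sign, and the ladder factorization $I_j(x)=(2x)^jh^{(j)}(x^2)$ splits the kernel into two factors of opposite total-positivity orientation --- but these only explain why the soft arguments fail; they do not supply the cancellation. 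The proposed route via generalized Wronskians and the three-term recurrence is a plan, not an argument, and you flag it yourself as the principal difficulty. So the proof has a genuine gap precisely at its core step.

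For comparison, the paper closes this step by a mechanism you do not consider: it does not treat each index tuple $(j_1,\dots,j_m)$ in isolation, but fixes the differences $w_j=x_j-x_1$ and regards the \emph{entire family} of determinants $(f_k(x_1,w))_{k\in K_m}$, over all increasing integer tuples $k$ simultaneously, as an $l_2$-valued function of the single variable $x_1$. The recurrence $I_j'=\frac12(I_{j-1}+I_{j+1})$ then yields the differential-difference heat equation $\partial f/\partial x_1=\dd f+mf$, whose right-hand side couples each $f_k$ to the neighbors $f_{k'}$ with strictly positive coefficients; hence the positive quadrant of $l_2$ is invariant under the flow. The initial condition at $x_1=0$ lands in that quadrant because $I_j(0)=0$ for $j\neq0$, which reduces it to the case of $m-1$ indices and closes an induction on $m$. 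It is exactly this coupling across different tuples $k$ --- absent from your single-tuple Wronskian scheme --- that propagates your correctly established boundary positivity into the interior. If you wish to salvage your approach, you would need either to carry out the Chebyshev-system induction in full (establishing positivity of all the confluent Wronskians $W(I_{j_1},\dots,I_{j_m})$, which is not in the literature you cite and is not obviously easier than the theorem itself) or to import a propagation mechanism of the paper's kind.
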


\begin{remark} \label{exhan} 
It is known that the  infinite matrix $(A_{km})_{k,m\in\zz}$ with $A_{km}=I_{m-k}(x)$ is totally positive for every $x>0$, 
see \cite[theorem 1.3]{bg}. Its total nonnegativity follows from  I.Schoenberg's fundamental theorem proved 
in 1948  \cite{schon2}. 
\end{remark}
Let us reformulate Theorem \ref{pos0} in a more explicit way. To do this, set 
$$X_l=\{ x=(x_1,\dots,x_l)\in\rr_+^l \ | \ x_1<x_2<\dots<x_l\};$$
$$ K_m=\{ k=(k_1,\dots,k_m)\in\zz_{\geq0}^m \ | \ k_1<k_2<\dots<k_m\}.$$
For every $x\in X_l$ and $k\in K_m$ set 
\begin{equation} A_{k,x}=(a_{ij})_{i=1,\dots,l; \ j=1,\dots m}, \ a_{ij}=I_{k_j}(x_i).\label{akx}\end{equation}
In the special case, when $l=m$, set 
\begin{equation} f_{k}(x)=\det A_{k,x}.\label{fka}\end{equation}

\begin{theorem} \label{pos} For every $m\in\nn$,  $k\in K_m$ and $x\in X_m$  one has $f_{k}(x)>0$.
\end{theorem}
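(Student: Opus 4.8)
The plan is to prove Theorem \ref{pos} by combining the integral representation of the $I_j$ with the semigroup structure coming from their generating function, and upgrading the single-argument total positivity of Remark \ref{exhan} and \cite{bg} so as to allow distinct arguments $x_1<\dots<x_m$ in distinct rows. The two structural inputs I would record first, both consequences of $g_y(z)=e^{\frac y2(z+z^{-1})}=\sum_j I_j(y)z^j$, are the integral formula $I_j(y)=\frac1\pi\int_0^\pi e^{y\cos\phi}\cos(j\phi)\,d\phi$ and the convolution identity $I_k(s+t)=\sum_{n\in\zz}I_n(s)I_{k-n}(t)$, which follows from $g_{s+t}=g_sg_t$ and says that the bi-infinite matrix $\mathcal I(t)=(I_{b-a}(t))_{a,b\in\zz}$ is a one-parameter multiplicative semigroup each of whose members is totally positive.

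Using the integral formula and the generalized Cauchy--Binet (basic composition) formula of \cite[Chapter 2]{karlin}, the determinant $f_k(x)=\det A_{k,x}$ of $(\ref{akx})$ becomes an integral over the simplex $0<\phi_1<\dots<\phi_m<\pi$ of the product $\det(e^{x_i\cos\phi_s})_{i,s}\cdot\det(\cos(k_j\phi_s))_{s,j}$. The first factor is sign-definite: $e^{xu}$ is a strictly totally positive kernel and $\phi\mapsto\cos\phi$ is monotone, so the factor keeps a fixed sign on the whole simplex. The second factor equals $\det(T_{k_j}(\cos\phi_s))$ with Chebyshev polynomials $T_{k_j}$, and since the arguments $\cos\phi_s$ range through $(-1,1)$ it genuinely changes sign on the simplex. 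Consequently the integrand is \emph{not} of one sign, and $f_k>0$ is a cancellation phenomenon rather than a termwise positivity: this is the heart of the matter and rules out any argument that merely checks nonnegativity of a single composition.

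To capture the cancellation as a net positive quantity I would pass to the semigroup picture. Writing $x_i=y_1+\dots+y_i$ with $y_1=x_1$ and $y_l=x_l-x_{l-1}>0$, the convolution identity expands each entry as $I_{k_j}(x_i)=(\mathcal I(y_1)\cdots\mathcal I(y_i))_{0,k_j}$, a weighted sum over nearest-neighbour lattice paths in space--time from the site $0$ to the site $k_j$. Applying Cauchy--Binet slice by slice and invoking the total positivity of each $\mathcal I(y_l)$ realizes $f_k(x)$ as the Karlin--McGregor / Lindstr\"om--Gessel--Viennot coincidence weight of $m$ such walks: the sign-reversing involution on intersecting families cancels all negative contributions and leaves a sum of nonnegative weights of non-intersecting families. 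Strict positivity then follows from the existence of at least one non-intersecting family of positive weight, which the strict orderings $k_1<\dots<k_m$ and $x_1<\dots<x_m$ are meant to guarantee; the overall normalization is pinned down independently by letting $x\to 0^+$, since $I_k(x)\sim (x/2)^k/k!$ makes the leading term of $f_k(x)$ a positive multiple of the generalized Vandermonde $\det(x_i^{k_j})$, which is positive by the Example above.

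The delicate points I expect to fight with are concentrated in this last step. Because every walk starts at the common site $0$, the non-intersecting picture is degenerate at time $0$; justifying that the strict ordering of the $x_i$ together with the skip-free (nearest-neighbour) nature of the walk nonetheless force a strictly positive non-crossing contribution will require either a short limiting argument or a direct combinatorial analysis of the first time-slice. One must also justify the interchange of the bi-infinite summations in the slice-by-slice Cauchy--Binet and the convergence of the path expansion, which is precisely where the total positivity and the rapid decay of $I_n(t)$ in $n$ enter. Establishing this net positivity against the sign cancellation exhibited in the second paragraph is the main obstacle of the proof.
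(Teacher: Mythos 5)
Your strategy is genuinely different from the paper's. The paper argues by induction on $m$: fixing $w_j=x_j-x_1$, it shows that the infinite vector $(f_k(x_1,w))_{k\in K_m}$ is an $l_2$-valued solution of the differential--difference heat equation $\partial f/\partial x_1=\Delta_{discr}f+mf$, whose flow preserves the positive quadrant; nonnegativity then follows from the induction hypothesis applied to the initial value at $x_1=0$, and strictness by propagating a hypothetical zero of some $f_p$ back to $x_1=0$ using the positivity of the coefficients of the equation. Your observation that the integral representation plus Karlin's basic composition formula cannot work termwise (because the Chebyshev factor $\det(\cos(k_j\phi_s))$ changes sign on the simplex) is correct and worth recording. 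But the central step of your proposal, the Lindstr\"om--Gessel--Viennot / Karlin--McGregor cancellation, is asserted rather than proved, and as stated it does not go through. The matrix $A_{k,x}$ is not a product of the matrices $\mathcal I(y_l)$: its $i$-th row is row $0$ of a product $\mathcal I(y_1)\cdots\mathcal I(y_i)$ whose length depends on $i$, so there is no slice-by-slice Cauchy--Binet identity to invoke; one must instead work on a layered graph with a single source $(0,0)$ and sinks $(i,k_j)$ lying on different time slices. In that graph every pair of paths meets at the common source, so the naive sign-reversing involution cancels everything and yields $\det=0$; moreover the one-slice weight $I_{b-a}(y_l)$ permits jumps of arbitrary size, so two paths can exchange their order without sharing a vertex, violating the crossing-implies-intersection hypothesis of LGV unless each slice is further resolved into nearest-neighbour steps of a continuous-time walk, which reintroduces nontrivial convergence and limiting issues.

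Even granting a representation of $f_k(x)$ as a sum of nonnegative weights of non-intersecting families, strict positivity requires exhibiting one such family of positive weight, and that is exactly the degenerate configuration (all walkers at site $0$ at time $0$, forced to separate instantly) that you defer to ``a short limiting argument or a direct combinatorial analysis.'' Your $x\to0^+$ asymptotics via the generalized Vandermonde gives $f_k>0$ only near the origin and fixes the overall sign; it does not exclude a zero of $f_k$ elsewhere in $X_m$ if all you otherwise have is $f_k\ge0$. These are not routine details but the actual content of the theorem, so as written the proof has a genuine gap at its core. If you want to pursue this route, the cleanest fix is probably to desingularize the source by starting the walkers at distinct sites $0,-1,\dots,-(m-1)$, applying the genuine Karlin--McGregor formula with a common observation time, and passing to a limit with uniform control of the tails of $I_n$; otherwise the paper's flow-invariance argument settles both nonnegativity and strictness with a single differential equation.
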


Theorem \ref{pos}, which is equivalent to Theorem \ref{pos0}, will be proved in Section 2. 

\begin{corollary} For every $x=(x_1,\dots,x_l)\in X_l$ the one-sided infinite matrix  
formed by the values $a_{ij}=I_j(x_i)$, $i=1,\dots,l$, 
$j=0,1,2,\dots$ is  totally positive. 
\end{corollary}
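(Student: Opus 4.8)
The plan is to observe that the Corollary is essentially a repackaging of Theorem \ref{pos}, obtained by unwinding the definition of total positivity for the infinite matrix; no new analytic input is required. Let $M=(a_{ij})$ with $a_{ij}=I_j(x_i)$, $i=1,\dots,l$, $j=0,1,2,\dots$, where $x=(x_1,\dots,x_l)\in X_l$ is fixed. By the definition of a totally positive matrix I must show that every minor of $M$, of every order $p$, is positive. Such a minor is specified by a choice of $p$ row indices $1\leq i_1<\dots<i_p\leq l$ and $p$ column indices $0\leq j_1<\dots<j_p$, and equals the determinant of the $p\times p$ submatrix $(a_{i_s j_t})_{s,t=1}^p=(I_{j_t}(x_{i_s}))_{s,t=1}^p$.

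The next step is the bookkeeping that identifies this submatrix with one of the matrices $A_{k,x}$ from \eqref{akx}. Setting $x'=(x_{i_1},\dots,x_{i_p})$ and $k'=(j_1,\dots,j_p)$, the strict inequalities $0<x_1<\dots<x_l$ inherited from $x\in X_l$ force $0<x_{i_1}<\dots<x_{i_p}$, so $x'\in X_p$; similarly the choice $0\leq j_1<\dots<j_p$ gives $k'\in K_p$. With these identifications the submatrix is exactly $A_{k',x'}$, and hence the minor under consideration equals $f_{k'}(x')=\det A_{k',x'}$.

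Finally I would apply Theorem \ref{pos} with $m=p$: since $k'\in K_p$ and $x'\in X_p$, it gives $f_{k'}(x')>0$. Because $p$ and the selected rows and columns were arbitrary, every minor of $M$ is positive, which is precisely the statement that $M$ is totally positive.

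There is no genuine obstacle here, since all the content is carried by Theorem \ref{pos}; the only things to verify are that the row and column selection respects the index sets $X_p$ and $K_p$, and both are immediate from monotonicity. The one point deserving a sentence of care is that total positivity of an infinite matrix is defined minor-by-minor over all finite orders $p$, so the passage from the finite determinants $f_{k'}(x')$ to the one-sided infinite matrix $M$ requires no limiting argument: each minor is already a finite object covered by Theorem \ref{pos}.
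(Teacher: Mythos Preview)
Your proof is correct and is exactly the argument the paper has in mind: the paper simply states that the corollary ``follows immediately from the theorem,'' and you have written out the obvious details of that immediate deduction by identifying each minor of $M$ with a determinant $f_{k'}(x')$ covered by Theorem~\ref{pos}.
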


This corollary follows immediately from the theorem.

\def\ka{\kappa_{\alpha}}
\begin{remark} \label{karl} Various necessary 
and sufficient conditions on  a kernel $K$ to be (strictly) totally positive were stated and proved in S.Karlin's book 
\cite[chapter 2]{karlin}. In the case, when $K(x,y)$ is defined on a product of two intervals and is smooth enough, 
a sufficient condition for its strict total positivity 
says that appropriate matrix formed by 
appropriate (higher order) partial derivatives of the function $K$ is positive everywhere 
\cite[chapter 2,  theorem 2.6, p. 55]{karlin}. 
(The same condition written in the form of non-strict inequality is necessary for total positivity, see 
\cite[chapter 2,  theorem 2.2, p. 51]{karlin}.)  
In \cite[chapter 3, p. 109]{karlin}  S.Karlin presented a construction of totally positive kernel coming from 
one modified Bessel function of the first kind. Namely, set 
$$\ka(x;\lambda)=\begin{cases} e^{-(x+\lambda)}(\frac x{\lambda})^{\frac{\alpha}2}I_{\alpha}(2\sqrt{x\lambda}) \text{ for } x\geq0\\
0 \text{ for } x<0\end{cases},$$
$$K_{\alpha}(x,y)=\kappa_{\alpha}(x-y; \lambda).$$
 It was shown  in loc. cit. (just after corollary 2.1) that for every $\alpha>1$ and every $r<\alpha+2$ the function 
 $K_{\alpha}(x,y)$ is a totally positive kernel of order $r$. 
 S.Karlin also presented constructions of totally positive 
matrices coming from the Green function of a given linear differential operator presented as a product of first order linear differential 
 operators of appropriate type. His constructions 
 deal with the fundamental solution $\phi(x,t)$ of the corresponding linear differential equation; 
 the function $\phi(x,t)$ depends on the parameter $t$: the value of argument where the fundamental solution jumps. 
 He had shown that appropriate 
 class of matrices of type $\phi(x_i,t_j)$ have positive determinants \cite[p.503]{karlin}. He provided a similar construction of totally positive matrices associated to  a given classical Sturm--Liouville  differential operator \cite[pp.535--538]{karlin}. Let us emphasize that 
 each totally positive matrix given by our construction includes values  of modified Bessel functions 
 with different indices, which are solutions of Sturm--Liouville equations 
with different parameters. On the other hand, each totally positive matrix from the 
above-mentioned Karlin's construction is expressed 
via either  just one given modified Bessel function, or via fundamental solutions of just one given linear equation. 
Therefore, our result is not covered by Karlin's constructions.
\end{remark}

We prove Theorem \ref{pos} by induction in $m$. For the proof of the induction step we consider the sequence of all the determinants $f_{k}(x)$ for all $k\in K_m$ as an infinite-dimensional 
vector function in new variables $y=(x_1,w)$, $w=(w_2,\dots,w_m)$, $w_j=x_j-x_1$. We fix $w$ and consider the latter vector function 
as a function of one variable $x_1\geq0$. Analogously to the arguments from 
\cite[section 2]{bg}, we show that it satisfies an ordinary differential 
equation given by a linear bounded vector field on the Hilbert space $l_2$ with coordinates $f_k$, $k\in K_m$ such that 
the positive quadrant $\{ f_k\geq0 \ | \ k\in K_m\}$ is invariant for its flow. We show that the initial value of the vector function 
 for $x_1=0$ lies in this quadrant and is non-zero. This will imply positivity of all the functions $f_k(x_1,w)$ for all $x_1>0$, as in loc. cit. 
 
 In Section 2 we present Footnotes 1 and 2 correcting coefficients in 
 two formulas from our previous paper \cite{bg}. These formulas were used in \cite{bg} in similar arguments, 
 which remain valid without changes after these corrections.

It is known that the modified Bessel functions $I_{\nu}(x)$ of the first kind are given by the series 
$$I_{\nu}(x)=(\frac12x)^{\nu}\sum_{k=0}^{\infty}\frac{(\frac14x^2)^k}{k!\Gamma(\nu+k+1)},$$
and the latter series extends them to all the real values of the index $\nu$. 

Thus, the modified Bessel functions of the first kind yield examples of totally positive kernels of two following different kinds. For every $r\in\nn$ Karlin's example, see Remark \ref{karl}, yields a totally positive kernel $K_{\alpha}(x,y)=\ka(x-y; \lambda)$ of order $r$ constructed from just one modified Bessel function $I_{\alpha}$ with arbitrary  real index $\alpha>1$ such that $r<\alpha+2$. Our main result gives other, strictly totally positive kernel $K(y,s)=I_s(y)$ depending on 
 $y\in \rr_+$ and $s\in \zz_{\geq0}$. 
 
\medskip

{\bf Open Question.} {\it Is it true that the determinants $f_k(x)$ in (\ref{fka}) with $x\in X_m$ are all positive for every $m\in\nn$ and every 
$k=(k_1,\dots,k_m)$ with} {\bf (may be non-integer)} $k_j$ and {\it $k_1\in\rr_{\geq0}$, $k_1<\dots<k_m$? In other terms, is it true that the kernel $K(y,s)=I_s(y)$ is strictly 
totally positive  as a function in $(y,s)\in\rr_+\times\rr_{\geq0}$?}

\subsection{A brief survey on total positivity in Grassmanian manifolds and Lie groups}

A point $L$ of Grassmanian manifold $G_{l,m}$ of $l$-subspaces in $\rr^m$, $m>l$ is  represented by an $l\times m$-matrix, whose 
lines form a basis of the  subspace represented by the point $L$. Recall that the {\it Pl\"ucker coordinates}  of the point $L$ 
are the $l$-minors of the latter matrix. The  Pl\"ucker coordinates of the point are well-defined up to multiplication by a common factor, 
and they  are considered as homogeneous coordinates representing a point of a projective space $\mathbb{RP}^N$, $N=
\left(\begin{matrix} m\\ l\end{matrix}\right)-1$. 
The Pl\"ucker coordinates  induce the Pl\"ucker  embedding of the Grassmanian manifold to  $\mathbb{RP}^N$.

\begin{definition} 
A point $L\in G_{l,m}$ is  called {\it strictly totally positive,} if one can normalize its Pl\"ucker coordinates to make all of them positive. Or equivalently, if it can be represented by a  matrix with all  the maximal minors positive. 
\end{definition}

A.E.Postnikov's paper \cite{post} deals with the 
matrices $l\times m$, $m\geq l$ of rank $l$ satisfying the condition of nonnegativity of just maximal minors. 
One of its 
main results provides an explicit combinatorial cell decomposition 
of the corresponding subset in the Grassmanian $G_{l,m}$, called the {\it totally nonnegative Grassmanian.}  The cells are coded by combinatorial types of appropriate planar networks. 
K.Talaska \cite{talaska} obtained further development and generalization of  Postnikov's result. In particular, for a given point 
of the totally nonnegative Grassmanian the results of \cite{talaska} allow to decide what is its ambient cell and what are its affine coordinates in the cell. S.Fomin and A.Zelevinsky \cite{fomzel} studied a more general notion of total 
positivity (nonnegativity) for elements of a semisimple complex Lie group with a given double Bruhat cell decomposition. 
 They have proved that the totally positive parts of the double Bruhat cells are 
bijectively parametrized by the product of the positive quadrant $\rr_+^m$ and the positive subgroup of the maximal torus. 
For other results on totally positive (nonnegative) Grassmanians see \cite{gelf}. 

Theorem \ref{pos} of the present paper implies the following corollary.

\begin{corollary} For every $l,m\in\nn$, $l<m$, and every $k\in K_m$ the mapping 
$ H_k: X_l\to G_{l, m}$ sending $x$ to  the $l$-subspace in $\rr^m$ generated by the vectors 
$$v_k(x_i)=(I_{k_1}(x_i)\dots I_{k_m}(x_i)), \ i=1,\dots,l$$
is well-defined and injective. Its image is contained in the open subset of  strictly totally positive points.
\end{corollary}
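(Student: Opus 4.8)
The plan is to derive all three assertions from the strict positivity furnished by Theorem \ref{pos}. Fix $l<m$, $k\in K_m$ and $x\in X_l$, and form the $l\times m$ matrix $A_{k,x}$ of (\ref{akx}), whose rows are the vectors $v_k(x_1),\dots,v_k(x_l)$. Its maximal minors are the $l\times l$ minors obtained by selecting columns $j_1<\dots<j_l$ in $\{1,\dots,m\}$, and such a minor equals $f_{k'}(x)$ for $k'=(k_{j_1},\dots,k_{j_l})\in K_l$ and $x\in X_l$ (here $k'\in K_l$ because the entries of $k$ are strictly increasing). By Theorem \ref{pos} every such minor is strictly positive. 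In particular $A_{k,x}$ has rank $l$, so the vectors $v_k(x_i)$ are linearly independent and $H_k(x)$ is a well-defined point of $G_{l,m}$; and since all its Pl\"ucker coordinates (the maximal minors just computed) are positive, $H_k(x)$ lies in the strictly totally positive locus. This settles well-definedness and the statement about the image simultaneously.

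For injectivity I would recover the unordered node set $\{x_1,\dots,x_l\}$ intrinsically from the subspace $L=H_k(x)$. Consider the curve $t\mapsto v_k(t)=(I_{k_1}(t),\dots,I_{k_m}(t))$, $t>0$. By construction $v_k(x_i)\in L$ for each $i$, so $\{x_1,\dots,x_l\}\subseteq N(L):=\{\,t>0\mid v_k(t)\in L\,\}$. The key claim is the reverse inclusion, i.e. that $N(L)=\{x_1,\dots,x_l\}$.

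To prove the claim, suppose some $t>0$ with $t\notin\{x_1,\dots,x_l\}$ satisfied $v_k(t)\in L$. Insert $t$ into the node set and reorder, obtaining $l+1$ distinct points $0<y_1<\dots<y_{l+1}$, and set $y=(y_1,\dots,y_{l+1})\in X_{l+1}$. Because $l<m$ we have $l+1\leq m$, so Theorem \ref{pos} applies to the $(l+1)\times(l+1)$ minors of the $(l+1)\times m$ matrix $A_{k,y}$: each such minor equals $f_{k''}(y)$ for an appropriate $k''\in K_{l+1}$ and is therefore positive, whence $A_{k,y}$ has rank $l+1$ and its rows $v_k(y_1),\dots,v_k(y_{l+1})$ are linearly independent. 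But every one of these rows lies in $L$ — the rows indexed by the original $x_i$ by construction, and the extra row $v_k(t)$ by the assumption $v_k(t)\in L$ — which forces $l+1$ independent vectors into the $l$-dimensional space $L$, a contradiction. Hence $N(L)=\{x_1,\dots,x_l\}$.

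Injectivity then follows at once: if $H_k(x)=H_k(x')=L$ for $x,x'\in X_l$, then $\{x_1,\dots,x_l\}=N(L)=\{x_1',\dots,x_l'\}$, and since both tuples are strictly increasing they must coincide, so $x=x'$. I expect the main obstacle to be precisely the injectivity argument, namely the verification that a node $t$ outside $\{x_1,\dots,x_l\}$ genuinely contributes an independent $(l+1)$-st vector; this is exactly the point where the hypothesis $l<m$ is indispensable, since it guarantees that the relevant $(l+1)\times(l+1)$ minors still lie in the range covered by Theorem \ref{pos}. By contrast, well-definedness and the strict total positivity of the image are essentially immediate reformulations of Theorem \ref{pos}.
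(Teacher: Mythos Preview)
Your proof is correct and follows essentially the same approach as the paper: both arguments derive well-definedness and strict total positivity directly from Theorem~\ref{pos} applied to the $l\times l$ minors of $A_{k,x}$, and both prove injectivity by showing via Theorem~\ref{pos} that adjoining any new node $t\notin\{x_1,\dots,x_l\}$ yields $l+1$ linearly independent vectors, so $v_k(t)\notin H_k(x)$. The only difference is cosmetic---you phrase injectivity as recovering the node set $N(L)$ intrinsically from $L$, whereas the paper picks a single component $y_i$ of $y$ not among the $x_j$ and shows $v_k(y_i)\notin H_k(x)$---but the underlying mechanism is identical.
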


\begin{proof} The well-definedness and positivity of Pl\"ucker coordinates are obvious, since the $l$-minors of the matrix $A_{k,x}$ are positive, by Theorem \ref{pos}.
Let us prove  injectivity. Fix some two distinct $x,y\in X_l$. Let us show that $H_k(x)\neq H_k(y)$. 
Fix a component $y_i$ that is different from every 
component $x_j$ of the vector $x$. Then the vectors $v_k(x_1),\dots, v_k(x_l), v_k(y_i)$ are linearly independent: 
every $(l+1)$-minor of the matrix formed by them is non-zero, by Theorem \ref{pos}. Hence, $v_k(y_i)$ is not contained in the 
$l$-subspace $H_k(x)$, which is generated by the vectors $v_k(x_1),\dots,v_k(x_l)$. Thus, $H_k(y)\neq H_k(x)$. The corollary is proved.
\end{proof}

\begin{example} Consider the infinite matrix with elements  
$$A_{ms}=I_{s-m}(x), \ \ \ m, s\in\zz.$$
It is shown in \cite[theorem 1.3]{bg} that this matrix is totally positive for every $x>0$, 
see Remark \ref{exhan}: 
all its minors  are positive. Therefore, the subspace 
generated by any of  its $l$  rows is $l$-dimensional, and it represents a strictly totally positive point of the infinite-dimensional Grassmanian 
manifold of $l$-subspaces in the infinite-dimensional vector space. Every submatrix in $A_{ms}$ given by its $l$ rows and 
  a finite number $r>l$ of columns represents a strictly totally positive point of the finite-dimensional Grassmanian manifold $G_{lr}$. 
\end{example}

\section{Positivity. Proof of Theorem \ref{pos}}

In the proof of Theorem \ref{pos} we use the following classical properties of the modified Bessel functions $I_j$ of the first kind, see 
\cite[section 3.7]{watson}. 
\begin{equation} I_j=I_{-j}; \label{sym}\end{equation}
\begin{equation} I_j|_{\{ y>0\}}>0; \ I_j(0)=0 \text{ for } j\neq0; \ I_0(0)>0;\label{ioo}\end{equation}
\begin{equation} I_0'=I_1; \ I_j'=\frac12(I_{j-1}+I_{j+1}).\label{differ}\end{equation}

We prove Theorem \ref{pos} by induction in $m$. 

Induction base. For $m=1$ the statement of the theorem is obvious and follows from inequality (\ref{ioo}). 

Induction step. Let the statement of the theorem  be proved for $m=m_0$. Let us prove it for $m=m_0+1$. To do this, 
consider the sequence of all the determinants $f_{k}(x)$ for all $k\in K_m$ as an infinite-dimensional vector function $(f_k(x_1,w))_{k\in K_m}$ in the new 
variables 
$$(x_1,w), \  \ w=(w_2,\dots,w_m), \ w_j=x_j-x_1; \  \ w\in X_{m-1}.$$
The next two propositions and corollary together imply that 
the vector function $(f_{k}(x_1,w))_{k\in K_m}$ 
with fixed $w$ and variable $x_1$   is a solution of a bounded linear ordinary differential equation in the Hilbert space $l_2$ 
of infinite sequences $(f_k)_{k\in K_m}$: a phase curve of a bounded linear vector field. 
We show that the positive quadrant $\{ f_k\geq0 \ | \ k\in K_m\}\subset l_2$ is invariant under the positive flow of the latter field, 
and the initial value $(f_k(0,w))_{k\in K_m}$ lies there. 
This implies that $f_{k}(x_1,w)\geq0$ for all $x_1\geq0$, and then we easily deduce that the latter inequality is strict for $x_1>0$. 
 
The discrete Laplacian is defined on functions on graphs. We will deal with the discrete Laplacian 
$\Delta_{discr}$ acting on functions  on the Cayley graph of the additive group $\zz^m$. 
Namely, for every $j=1,\dots,m$ let $T_j$ denote the 
corresponding shift operator: 
$$(T_jf)(k)=f(k_1,\dots,k_{j-1},k_j-1, k_{j+1},\dots,k_l).$$
Then 
\begin{equation} \dd=\frac12\sum_{j=1}^m(T_j+T_j^{-1}-2).\label{lapl1}\end{equation}
Thus, one has 
$$(\Delta_{discr}f)(p)=\frac12\sum_{s=1}^m(f(p_1,\dots,p_{s-1},p_s-1,p_{s+1},\dots,p_m)$$
\begin{equation}+f(p_1,\dots,p_{s-1},p_s+1,p_{s+1},\dots,p_m))-mf(p).\label{lapl2}\end{equation}

\begin{remark} \label{restrict} 
We will deal with the class of sequences $f(k)$ with the following properties: 

(i) $f(k)=0$, whenever  $k_i=k_j$ for some $i\neq j$;

(ii) $f(k)$ is an even function in each component $k_i$. 

This class includes   $f(k)=f_{k}(x_1,w)$: statement (i) is obvious; statement (ii) follows from equality (\ref{sym}). 
In this case the discrete Laplacian is well-defined by the above formulas (\ref{lapl1}), (\ref{lapl2}) 
on the restrictions of the latter sequences  $f(k)$ to  $k\in K_m$, as in \cite[remark 2.1]{bg}. (Each 
sequence $(f(k))_{k\in K_m}$ can be extended to a sequence $(f(k))_{k\in\zz^m}$ satisfying (ii) and antisymmetric with respect to permutation of components, hence satisfying (i).) 
In more detail, it suffices to check well-definedness of formula (\ref{lapl2}) for $p=(p_1,\dots,p_m)\in K_m$ 
with $p_1=0$. All the terms of the sum in (\ref{lapl2}) except for the first one are well-defined, since they 
are numerated by indices  $k\in K_m$ and maybe some indices $k$ with equal components $k_j=k_{j+1}$, 
for which $f(k)=0$, by (i). The first term equals 
$f(-1,p_2,\dots,p_m)+f(1,p_2,\dots,p_m)=2f(1,p_2,\dots,p_m)$, by (ii), and hence, is well-defined. 
\end{remark}

\begin{proposition} (analogous to \cite[proposition 2.2]{bg}). For every $m\geq1$ and $w\in \rr^{m-1}$ the vector function $(f(x_1,k)=f_{k}(x_1,w))_{k\in K_m}$ satisfies the following linear differential-difference 
equation\footnote{In similar formula (2.7) in \cite{bg} there is a misprint: the right-hand side 
should be multiplied by $\frac 12$}:
\begin{equation} \frac{\partial f}{\partial x_1}=\Delta_{discr}f+mf.
\label{diffs}\end{equation}
\end{proposition}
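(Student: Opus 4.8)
The plan is to reduce the claimed differential-difference equation to an elementary computation that converts the $x$-derivative into shifts of the Bessel indices, using the recurrence (\ref{differ}) and the symmetry (\ref{sym}). It suffices to verify the identity componentwise, so I fix $k\in K_m$ and treat the single scalar function $f=f_k(x_1,w)=\det A_{k,x}$ from (\ref{fka}), regarded through $x=(x_1,x_1+w_2,\dots,x_1+w_m)$.

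First I would record that differentiating in $x_1$ with $w$ held fixed is differentiation along the diagonal direction: since $x_j=x_1+w_j$ with $w_j$ constant, the chain rule gives
$$\frac{\partial f}{\partial x_1}=\sum_{i=1}^m\frac{\partial}{\partial x_i}\det A_{k,x},$$
because $\partial x_i/\partial x_1=1$ for every $i$. Next I would differentiate the determinant by multilinearity in its rows. The $i$-th row of $A_{k,x}$ depends only on $x_i$, so expanding the row-differentiated determinant along row $i$ yields $\partial_{x_i}\det A_{k,x}=\sum_{j=1}^m I_{k_j}'(x_i)\,C_{ij}$, where $C_{ij}$ is the $(i,j)$-cofactor of $A_{k,x}$. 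Summing over $i$ and then substituting the recurrence (\ref{differ}) in the form $I_{k_j}'=\frac12(I_{k_j-1}+I_{k_j+1})$ (valid at $k_j=0$ as well, once one invokes $I_{-1}=I_1$ from (\ref{sym})) gives
$$\frac{\partial f}{\partial x_1}=\frac12\sum_{j=1}^m\sum_{i=1}^m\bigl(I_{k_j-1}(x_i)+I_{k_j+1}(x_i)\bigr)C_{ij}.$$

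The key step is then to read each inner sum over $i$ as a cofactor expansion along column $j$. Because the cofactor $C_{ij}$ does not involve column $j$, it is unchanged if we alter only that column; hence $\sum_i I_{k_j-1}(x_i)C_{ij}$ is exactly the determinant of the matrix obtained from $A_{k,x}$ by lowering the index $k_j$ by one, that is $(T_jf)(k)$, and likewise $\sum_i I_{k_j+1}(x_i)C_{ij}=(T_j^{-1}f)(k)$. This collapses the double sum to $\partial f/\partial x_1=\frac12\sum_{j=1}^m(T_j+T_j^{-1})f$. Comparing with the definition (\ref{lapl1}) of $\dd$, namely $\dd=\frac12\sum_j(T_j+T_j^{-1}-2)$, one has $\frac12\sum_j(T_j+T_j^{-1})f=\dd f+\frac12\sum_j 2f=\dd f+mf$, which is precisely (\ref{diffs}).

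The only delicate point I expect is bookkeeping at the boundary of $K_m$ rather than any analytic difficulty: when $k_1=0$ the shift $T_1$ produces the index $-1$, which must be interpreted via $I_{-1}=I_1$, i.e.\ property (ii) of Remark \ref{restrict}; and shifts that would create two equal indices are harmless because the corresponding determinant vanishes, i.e.\ property (i). Once this matching with the well-definedness discussion of Remark \ref{restrict} is made explicit, the proposition follows entirely from the determinant identity above, with no further estimates required.
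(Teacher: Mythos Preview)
Your proof is correct and follows essentially the same approach as the paper: both reduce the derivative to the identity $\partial f/\partial x_1=\tfrac12\sum_{j}(T_j+T_j^{-1})f$ via the Bessel recurrence (\ref{differ}), and then read off (\ref{diffs}) from the definition of $\dd$. The only cosmetic difference is that the paper differentiates the determinant directly by columns (each column $V_{k_j}=(I_{k_j}(x_i))_i$ depends on $x_1$, and $\partial_{x_1}V_{k_j}=\tfrac12(V_{k_j-1}+V_{k_j+1})$), whereas you first split into row derivatives via the chain rule and then regroup into column replacements through cofactor expansion; the two computations are equivalent.
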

\begin{proof} Each function $f_k(x_1,w)$ is the determinant of the matrix whose columns are the vector
 functions 
$V_{k_j}(x_1,w)=(I_{k_j}(x_1), I_{k_j}(x_1+w_2),\dots.I_{k_j}(x_1+w_m))$, $j=1,\dots,m$. The derivative 
$\frac{\partial f_k(x_1,w)}{\partial x_1}$ 
thus equals the sum though $j=1,\dots,m$ of the same determinants, where the column $V_{k_j}$ 
is replaced by its derivative  $\frac{\partial V_{k_j}}{\partial x_1}$. But according to (\ref{differ}), 
the latter derivative equals $\frac12(V_{k_j-1}+V_{k_j+1})$. Therefore, 
\begin{equation} \frac{\partial f}{\partial x_1}=\frac12\sum_{j=1}^m(T_j+T_j^{-1})f=(\Delta_{discr}f+mf).
\label{diffs1}\end{equation}
\end{proof}

\begin{remark} \label{new} (analogous to \cite[remark 23]{bg}). For every $k\in K_m$ the $k$-th component of the right-hand side in (\ref{diffs}) is a linear combination with 
strictly positive coefficients of 
the components $f(x_1,k')$ with $k'\in K_m$ obtained from $k=(k_1,\dots,k_m)$ by adding $\pm1$ to some $k_i$. This follows from (\ref{diffs1}). 
\end{remark}

\begin{proposition} \label{term}  \cite[proposition 2.4]{bg}.   For every constant $R>1$ and every $j\geq R^2$ one has 
\begin{equation}|I_j(z)|<\frac{R^{j}}{j!} \text{ for every }  0\leq z\leq R.\label{ineqj}\end{equation}
\end{proposition}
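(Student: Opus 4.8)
The plan is to work directly from the power series for the modified Bessel function quoted just above the statement, namely
$$I_j(z)=\left(\frac z2\right)^j\sum_{k=0}^{\infty}\frac{(z^2/4)^k}{k!\,(j+k)!},$$
valid for integer $j\geq0$ (where $\Gamma(j+k+1)=(j+k)!$). Since $0\leq z\leq R$ and every term of this series is nonnegative, I would first note that $|I_j(z)|=I_j(z)$ by (\ref{ioo}), and replace $z$ by $R$ term by term to obtain
$$I_j(z)\leq\left(\frac R2\right)^j\sum_{k=0}^{\infty}\frac{(R^2/4)^k}{k!\,(j+k)!}.$$

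The key step is to extract the factor $1/j!$ from the series. Here I would use the elementary inequality $(j+k)!=j!\,(j+1)(j+2)\cdots(j+k)\geq j!\,(j+1)^k$, which gives
$$I_j(z)\leq\frac{(R/2)^j}{j!}\sum_{k=0}^{\infty}\frac1{k!}\left(\frac{R^2}{4(j+1)}\right)^k=\frac{(R/2)^j}{j!}\,\exp\!\left(\frac{R^2}{4(j+1)}\right).$$
Comparing with the desired bound $R^j/j!$, the claim reduces to the purely elementary inequality $\exp\!\left(R^2/(4(j+1))\right)<2^j$.

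Finally I would verify this last inequality using the hypothesis $j\geq R^2$. On the one hand, $R^2/(4(j+1))\leq j/(4(j+1))<1/4$, so the left-hand side is below $e^{1/4}<2$. On the other hand, $R>1$ forces $j\geq R^2>1$, hence $j\geq2$ and $2^j\geq4>e^{1/4}$; thus $\exp\!\left(R^2/(4(j+1))\right)<2^j$ strictly. Chaining the inequalities yields $I_j(z)\leq\frac{(R/2)^j}{j!}\exp\!\left(R^2/(4(j+1))\right)<\frac{(R/2)^j}{j!}\,2^j=\frac{R^j}{j!}$, which is the strict bound claimed.

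I expect the only real subtlety to be the choice of the factorial estimate $(j+k)!\geq j!\,(j+1)^k$: it is exactly strong enough both to pull out the factor $1/j!$ and to leave a convergent exponential series in $R^2/(4(j+1))$, whereas the cruder bound $(j+k)!\geq j!\,k!$ would leave a series that does not combine as cleanly with the target $R^j/j!$. Once this estimate is in place, the remaining work is the routine numerical comparison above, which the hypothesis $j\geq R^2$ makes comfortable.
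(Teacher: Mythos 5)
Your argument is correct and complete. Note that the paper itself gives no proof of this proposition --- it is imported verbatim as Proposition 2.4 of the cited earlier paper \cite{bg} --- so there is nothing in the present text to compare against; your proof stands as a self-contained verification from the power series for $I_j$ quoted in Section 1. The chain $|I_j(z)|=I_j(z)\leq\frac{(R/2)^j}{j!}\exp\bigl(R^2/(4(j+1))\bigr)<\frac{(R/2)^j}{j!}\,2^j$ is valid: monotonicity in $z$ of each (nonnegative) term justifies replacing $z$ by $R$, the estimate $(j+k)!\geq j!\,(j+1)^k$ is correct, and $R^2/(4(j+1))<1/4$ together with $e^{1/4}<2\leq 2^j$ closes the argument. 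One small remark on your closing comment: the cruder bound $(j+k)!\geq j!\,k!$ actually suffices as well, since it yields $I_j(z)\leq\frac{(R/2)^j}{j!}e^{R^2/4}\leq\frac{(R/2)^j}{j!}e^{j/4}$ and $e^{1/4}<2$ gives $e^{j/4}<2^j$; so the refined factorial estimate is a convenience rather than a necessity here.
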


\begin{corollary} \label{cl2} (analogous to \cite[corollary 2.6]{bg}). For every $w\in\rr^{m-1}$ and 
$x_1\in\rr$  one has 
$(f_k(x_1,w))_{k\in K_m}\in l_2$. Moreover, there exists a function $C(R)>0$ in $R>1$ such that 
\begin{equation}\sum_{k\in K_m}|f_k(x_1,w)|^2<C(R) \text{ whenever } |x_1|+|w|\leq R,\label{hilb}\end{equation}
here $|w|=|w_2|+\dots+|w_m|$. 
\end{corollary}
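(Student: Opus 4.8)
The plan is to reduce the $l_2$ bound to an $\ell^1$ bound and then estimate the latter by a product of one-variable sums controlled by Proposition \ref{term}. First I would invoke the elementary inequality $\sum_k|a_k|^2\leq(\sum_k|a_k|)^2$, valid for any real sequence, so that it suffices to produce an estimate of the form $\sum_{k\in K_m}|f_k(x_1,w)|\leq D(R)^m$ whenever $|x_1|+|w|\leq R$; squaring then yields (\ref{hilb}) with $C(R)=D(R)^{2m}$ (here $m$ is fixed throughout, so $C$ is allowed to depend on it). The same finite $\ell^1$-estimate simultaneously establishes that $(f_k(x_1,w))_{k\in K_m}\in l_2$.

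Next I would estimate a single determinant. Writing $x_i=x_1+w_i$ with $w_1=0$, the entries of $A_{k,x}$ are $I_{k_j}(x_i)$, and the constraint $|x_1|+|w|\leq R$ forces $|x_i|\leq|x_1|+|w_i|\leq R$ for every $i$. Expanding the determinant over permutations $\sigma$ of $\{1,\dots,m\}$ and passing to absolute values gives
$$|f_k(x_1,w)|\leq\sum_{\sigma}\prod_{i=1}^m|I_{k_{\sigma(i)}}(x_i)|.$$
I would then sum this over $k\in K_m$ and observe that, as $k$ ranges over strictly increasing tuples and $\sigma$ over all permutations, the tuples $(k_{\sigma(1)},\dots,k_{\sigma(m)})$ range bijectively over all $m$-tuples of \emph{distinct} nonnegative integers; dropping the distinctness restriction only enlarges the sum, so
$$\sum_{k\in K_m}|f_k(x_1,w)|\leq\sum_{j_1\geq0}\cdots\sum_{j_m\geq0}\prod_{i=1}^m|I_{j_i}(x_i)|=\prod_{i=1}^m\Bigl(\sum_{j=0}^{\infty}|I_j(x_i)|\Bigr).$$

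It then remains to bound $S(x_i):=\sum_{j=0}^{\infty}|I_j(x_i)|$ uniformly for $|x_i|\leq R$, which I would do by splitting the sum at $j=\lceil R^2\rceil$. For $j\geq R^2$ Proposition \ref{term} gives $|I_j(x_i)|=|I_j(|x_i|)|<R^j/j!$, using that $I_j$ is even and $0\leq|x_i|\leq R$; hence the tail is dominated by $\sum_{j\geq0}R^j/j!=e^R$. Each of the finitely many remaining terms with $j<R^2$ is bounded by the finite constant $\max_{|z|\leq R}|I_j(z)|$, whose sum is some $M(R)<\infty$. Therefore $S(x_i)\leq D(R):=M(R)+e^R$, and $\sum_{k\in K_m}|f_k(x_1,w)|\leq D(R)^m$, completing the estimate.

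I do not expect a serious obstacle; the argument is essentially careful bookkeeping. The two points deserving attention are the passage from the permutation expansion of the determinant to the product of one-variable sums (one must verify that the double sum over $k\in K_m$ and $\sigma$ reproduces exactly the sum over tuples of distinct indices, which in turn is majorized by the full product $\prod_i\sum_j$), and the use of the evenness of $I_j$ to reduce possibly negative arguments $x_i$ to the range $[0,R]$ on which Proposition \ref{term} is stated. The uniformity of $C(R)$ is automatic, since both $M(R)$ and $e^R$ depend on $R$ alone.
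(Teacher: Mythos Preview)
Your argument is correct and complete, with one cosmetic slip: $I_j$ is not literally even in its argument for odd $j$ (one has $I_j(-z)=(-1)^jI_j(z)$), but $|I_j|$ is even, which is all you use when writing $|I_j(x_i)|=|I_j(|x_i|)|$.

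Your route differs from the paper's. The paper does not pass through $\ell^1$ or factor the sum; instead it bounds each determinant pointwise by singling out the \emph{last} column. Setting $M=\max_{j\in\zz,\,0\le z\le R}|I_j(z)|$ (finite by Proposition~\ref{term}), it observes that the entries $I_{k_m}(x_i)$ in the last column are at most $R^{k_m}/k_m!$ while all other entries are at most $M$, giving
\[
|f_k(x_1,w)|<m!\,M^{m-1}\,\frac{R^{k_m}}{k_m!},
\]
and then sums squares directly to obtain $C(R)=(m!M^{m-1})^2\sum_{k\in K_m}R^{2k_m}/(k_m!)^2<\infty$. So the paper extracts decay from a single index $k_m$ and absorbs the rest into the uniform constant $M$, whereas you distribute the decay symmetrically over all $m$ columns via the permutation expansion and the product $\prod_i\sum_{j\ge0}|I_j(x_i)|$. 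Your factorization is arguably cleaner and yields a more explicit constant; the paper's bound is slightly cruder but stays in $\ell^2$ throughout and avoids the $(k,\sigma)\leftrightarrow$distinct-tuple bookkeeping. Both ultimately rest on the same tail estimate from Proposition~\ref{term}.
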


\begin{proof} The proof of Corollary \ref{cl2}  repeats the proof of \cite[corollary 2.6]{bg} with minor changes. Fix an $R>1$. Set 
$$M=\max_{j\in\zz, \ 0\leq z\leq R}|I_j(z)|.$$
The number $M$ is finite, by (\ref{ineqj}) and \cite[remark 2.5]{bg}. 
Recall that $0\leq k_1<\dots<k_m$ for every $k=(k_1,\dots,k_m)\in K_m$. 
 For every $k\in K_m$ one has 
\begin{equation} |f_{k}(x_1,w)|<
m!\frac{R^{k_m}}{k_m!}M^{m-1} \text{ whenever }  |x_1|+|w|\leq R.\label{ineqf}\end{equation} 
Indeed, the last column of the matrix $A_{k,x}$ consists of the values $I_{k_m}(x_i)=I_{k_m}(x_1+w_i)$, 
which are no greater than $\frac{R^{k_m}}{k_m!}$, whenever $|x_1|+|w|\leq R$, by inequality (\ref{ineqj}). 
The other matrix elements are no greater that $M$ on the same set. Therefore, the modulus $|f_{k,n}(x)|$ of its determinant  
defined as sum of $m!$ products of functions $I_j$ satisfies inequality 
(\ref{ineqf}). This implies that the sum in (\ref{hilb}) through $k\in K_m$ is no greater than\footnote{In paper \cite{bg} there is a misprint in a similar formula for the constant $C(R)$ on p.3863: in its 
right-hand side one should 
replace the factor $l!M^{l-1}$ and the term $\frac{R^{|k|_{n,\max}}}{(|k|_{n,\max})!}$ by their squares.} 
$$C(R)=(m!M^{m-1})^2\sum_{k\in K_m}\frac{R^{2k_m}}{(k_m!)^2}<+\infty.$$ 
The corollary is proved.
 \end{proof}

\begin{definition} \cite[definition 2.7]{bg}. Let $\Omega$ be the closure of an open convex subset in a Banach space. For every 
$x\in\partial\Omega$ consider the 
union of all the rays issued from $x$ that intersect $\Omega$ in at least two distinct points (including $x$). The closure of the latter union of rays 
is a convex cone, which will be here referred to, as the {\it generating cone $K(x)$.}
\end{definition}

\begin{proposition} \label{inv} \cite[proposition 2.8]{bg}. Let $H$ be a Banach space, $\Omega\subset H$ be 
a convex set as above. Let $v$ be a $C^1$ vector field on a neighborhood of 
the set $\Omega$ in $H$ such that $v(x)\in K(x)$ for every $x\in\partial\Omega$. 
Then the set $\Omega$ is invariant under the flow of the field $v$: each positive semitrajectory starting at $\Omega$ is 
contained in $\Omega$. 
\end{proposition}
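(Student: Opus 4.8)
The plan is to prove Proposition~\ref{inv}, the flow-invariance of a closed convex set $\Omega$ under a vector field that points into the generating cone along the boundary. The statement is essentially a Nagumo-type subtangentiality theorem adapted to a closed convex set, so I would build the proof around the standard idea: control the distance to $\Omega$ along a trajectory and show it cannot become positive. Since $v$ is $C^1$ on a neighborhood of $\Omega$, local existence and uniqueness of solutions is available, so the content is entirely in verifying that a trajectory starting in $\Omega$ never leaves it.

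First I would set up the distance function $d(x)=\operatorname{dist}(x,\Omega)$, which is convex and $1$-Lipschitz because $\Omega$ is convex and closed. For a point $x\in H$ near $\partial\Omega$, let $\pi(x)$ be its unique nearest point in $\Omega$ (unique by convexity in a Banach space requires care, so in a Hilbert space this is immediate; in a general Banach space I would instead work with the one-sided directional derivative of $d$ and a supporting functional). The key geometric lemma I would isolate is that the boundary condition $v(y)\in K(y)$ for $y\in\partial\Omega$ forces the one-sided derivative of $d$ in the direction $v(y)$ to be nonpositive: intuitively, $v(y)$ lies in the cone of directions that re-enter $\Omega$, so moving along $v(y)$ does not increase the distance to $\Omega$. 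Concretely, if $p\in\partial\Omega$ is the nearest point to a trajectory point $x(t)$ and $\ell$ is a supporting linear functional of $\Omega$ at $p$ with $\ell(z-p)\le 0$ for all $z\in\Omega$, then $v(p)\in K(p)$ should yield $\ell(v(p))\le 0$, since the generating cone $K(p)$ consists of limits of directions pointing from $p$ toward $\Omega$.

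Next I would run a Gr\"onwall-type estimate on $\varphi(t)=d(x(t))$ along a solution $x(t)$ of $\dot x=v(x)$ with $x(0)\in\Omega$. Using that $d$ is Lipschitz and differentiable almost everywhere along the trajectory, together with the chain-rule inequality $\frac{d}{dt}\varphi(t)\le \limsup_{h\to0^+}\frac{d(x(t)+h\,v(x(t)))-d(x(t))}{h}$, and the $C^1$ (hence locally Lipschitz) character of $v$, I would derive a differential inequality of the form $\dot\varphi(t)\le L\,\varphi(t)$ for some local Lipschitz constant $L$ of $v$. The boundary condition enters precisely at points where $x(t)$ has left $\Omega$ and its projection $p=\pi(x(t))\in\partial\Omega$ satisfies $\ell(v(p))\le 0$; comparing $v(x(t))$ with $v(p)$ costs an error bounded by $L\,\|x(t)-p\|=L\,\varphi(t)$. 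Since $\varphi(0)=0$, Gr\"onwall gives $\varphi(t)\le 0$, hence $\varphi\equiv0$ and $x(t)\in\Omega$ for all $t\ge0$ in the interval of existence.

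The main obstacle I expect is making the geometric lemma — that the subtangential cone condition $v(p)\in K(p)$ translates into $\ell(v(p))\le 0$ for every supporting functional $\ell$ at $p$ — fully rigorous in a general Banach space rather than a Hilbert space, since the nearest-point projection may fail to be single-valued or well-behaved, and the distance function need not be smooth. I would handle this by characterizing $K(p)$ intrinsically through supporting hyperplanes: a ray from $p$ that meets $\Omega$ in a second point must lie in the half-space $\{z:\ell(z-p)\le0\}$ for every supporting functional $\ell$, and taking closures preserves this, so $K(p)\subset\bigcap_\ell\{z:\ell(z-p)\le0\}$; this containment is exactly what delivers $\ell(v(p))\le0$ and closes the argument. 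The remaining steps — convexity and Lipschitzness of $d$, local existence of trajectories, and the Gr\"onwall estimate — are routine given the $C^1$ hypothesis on $v$.
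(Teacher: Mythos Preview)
The present paper contains no proof of this proposition: it is quoted from \cite[proposition 2.8]{bg} and used as a black box in the induction step for Theorem~\ref{pos}. So there is no ``paper's own proof'' to compare your proposal against.

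On its own merits, your plan is the standard Nagumo--Brezis subtangentiality argument and is sound in outline. The geometric lemma you isolate --- that $K(p)$ lies in the intersection of the supporting half-spaces at $p$, hence $\ell(v(p))\le 0$ for every supporting functional $\ell$ --- is correct and is exactly the bridge between the cone condition and the distance-function estimate. The one genuine technical point you correctly flag is that in a general Banach space the metric projection onto $\Omega$ need not exist or be single-valued; the usual fix is to work with $\varepsilon$-approximate nearest points and pass to the limit, or to replace the pointwise projection by the subdifferential of the convex function $d(\cdot)=\operatorname{dist}(\cdot,\Omega)$ and use the duality map. With that adjustment the Gr\"onwall step goes through as you describe.
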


Now the proof of  the induction step in Theorem \ref{pos} is analogous to the argument from \cite[end of section 2]{bg}. The right-hand 
side of differential equation  (\ref{diffs}) is a bounded linear vector field on the Hilbert space $l_2$ 
of sequences $(f_k)_{k\in K_m}$. We will denote the latter vector field by $v$. Let 
$\Omega\subset l_2$ denote the ``positive quadrant'' defined by the inequalities $f_k\geq0$. For every point $y\in\partial\Omega$ the 
vector $v(y)$ lies in its generating cone $K(y)$: the components of the field $v$ are non-negative on $\Omega$, by Remark \ref{new}.  
The vector function $(f_k(x_1)=f_{k}(x_1,w))_{k\in K_m}$ in $x_1\geq0$ is an $l_2$-valued  solution of the corresponding differential equation, 
by Corollary \ref{cl2}. One has $(f_{k}(0))_{k\in K_m}\in\Omega$:
\begin{equation}f_{k}(0)=0 \text{ whenever } k_1>0; \ f_{(0,k_2,\dots,k_m)}(0)=I_0(0)f_{(k_2,\dots,k_m)}(w_2,\dots w_m)>0.\label{foo}\end{equation}
The latter equality and inequality follow from definition, the properties (\ref{ioo}) and the induction hypothesis. This together with Proposition \ref{inv} implies that 
\begin{equation} f_{k}(x_1,w)\geq0 \text{ for every } k\in K_m \text{ and } x_1\geq0.\label{nonneg}\end{equation}
 Now let us prove that the inequality is strict for all $k\in K_m$ and $x_1>0$. Indeed, let 
$f_{p}(x_0)=0$ for some $p=(p_1,\dots,p_m)\in K_m$ and $x_0>0$. All the derivatives of the function 
$f_{p}$ are non-negative, by (\ref{diffs}), Remark \ref{new} and (\ref{nonneg}). Therefore,  $f_{p}\equiv0$ 
on the segment $[0,x_0]$. This together with (\ref{diffs}), Remark \ref{new}  
and (\ref{nonneg}) implies that $f_{p'}\equiv0$ on $[0,x_0]$ for every $p'\in K_m$ obtained from $p$ by adding $\pm1$ to some component. 
We then get by induction that $f_{(0,k_2,\dots,k_m)}(0)=0$, -- a contradiction to (\ref{foo}). The proof of Theorem \ref{pos} is complete.

\section{Acknowledgments} 

The authors are grateful to A.M.Vershik, F.V.Petrov, A.I.Bufetov, P.G.Grinevich, V.G.Gorbunov, A.K.Pogrebkov, V.M.Tikhomirov and V.Yu.Protasov for helpful discussions.

%


\begin{thebibliography}{}


\bibitem{abgryn} Abenda, S.; Grinevich, P. Rational degeneration of M-curves, totally positive Grassmannians and KP-solitons. To appear in Comm. Math. Physics. Available at https://arxiv.org/abs/1506.00563

\bibitem{berfomzel} Berenstein, A.; Fomin, S.; Zelevinsky, A. {\it Parametrizations of canonical bases and totally positive matrices.}  Adv. Math. \textbf{122} (1996), no. 1, 49--149. 

\bibitem{berzel} Berenstein, A.;  Zelevinsky, A. {\it Total positivity in Schubert varieties,} Comment. Math. Helv.
\textbf{72} (1997), 128--166.

\bibitem{pog} Boiti, M.; Pempinelli, F.; Pogrebkov, A.K. {\it Heat operator with pure soliton potential: Properties of Jost
and dual Jost solutions.} J. Math. Phys., \textbf{52} (2011), No. 8, 083506. 

\bibitem{bg} Buchstaber, V.M.; Glutsyuk, A.A. {\it On determinants of modified Bessel functions and entire solutions of double confluent 
Heun equations.} Nonlinearity, \textbf{29} (2016),  3857--3870.

\bibitem{bt2} Buchstaber, V.M.; Tertychnyi, S.I. {\it A remarkable sequence of Bessel matrices,} Mathematical Notes, \textbf{98} (2015), No. 5,  714--724.
 
\bibitem{ch} Choquet-Bruhat, Y., \ de Witt-Morette, C., \ Dillard-Bleick, M. {\it Analysis, 
Manifolds and Physics,}  North-Holland, 1977.

\bibitem{edrei} Edrei, A. {\it On the generating function of a doubly infinite, totally positive sequence,} 
Trans. Amer. Math. Soc. \textbf{74} (1953), 367--383.

\bibitem{faljon} Fallat, S.M.; Johnson, C.R. {\it Totally nonnegative matrices.} Princeton University Press, 2011. 

\bibitem{faljon2} Fallat, S.M.; Johnson, C.R.; Sokal, A.D. {\it Total positivity of sums, 
Hadamard products and Hadamard powers: results and counterexamples.}  Linear Algebra Appl. 
\textbf{520} (2017), 242--259. 

\bibitem{fomin} Fomin, S. {\it Total positivity and cluster algebras.} Proc. Int. Congr. Math., Hyderabad, India, 
Volume \textbf{II}, 125--145, Hindustan Book Agency, New Delhi, 2010. 

\bibitem{fomzel} Fomin, S.; Zelevinsky, A. {\it Double Bruhat cells and total
positivity,}  Journal of the American Mathematical Society \textbf{12} 
(1999), No. 2, 335--380. 



\bibitem{gant} Gantmacher, F.R. {\it The theory of matrices.} Chelsea Publishing Company, New York, N.Y., 1959. 

\bibitem{gantkr-1} Gantmacher, F.; Krein, M, {\it Sur les matrices oscillatoires.} Comptes Rendus Acad. Sci. Paris \textbf{201} 
(1935), 577--579.

\bibitem{gantkr-0} Gantmacher, F.; Krein, M, {\it Sur les matrices oscillatoires et compl\`etement non negatives.} Compositio Math. 
\textbf{4} (1937), 445--476.  


\bibitem{gantkr} Gantmacher, F. R.; Krein, M. G. 
{\it Oscillation matrices and small oscillations of mechanical systems.} (in Russian),  Moscow--Leningrad, 1941.

\bibitem{gantkr2} Gantmacher, F. R.; Krein, M. G. 
{\it Oscillation matrices and kernels and small oscillations of mechanical systems.} (in Russian),  2d ed. Gosudarstv. Isdat. Tehn.-Teor. 
Lit., Moscow-Leningrad, 1950. English translation: American Mathematical Society, Providence, 2002.

\bibitem{gm96}  Gasca, M.; Pe\~na, J. M. {\it On factorizations of totally positive matrices.} Total positivity and its applications (Jaca, 1994; M.Gasca and C.A.Micchelli eds.), 109--130, Math. Appl., \textbf{359}, Kluwer Acad. Publ., Dordrecht, 1996.


\bibitem{gelf} Gelfand, I.M.;  Goresky, R.M.; MacPherson, R.D.; Serganova, V. {\it Combinatorial geometries,
convex polyhedra, and Schubert cells.} Advances in Mathematics \textbf{63} (1987),
No. 3, 301--316.

\bibitem{karlin} Karlin, S. {\it Total positivity, Vol. 1.} Stanford Univ. Press, 1968, 576 pp.  

\bibitem{kw} Kodama, Y.; Williams, L. {\it KP solitons and total positivity for the Grassmannian.} Invent. Math. \textbf{198} (2014), 
637--699. 

\bibitem{lustig1} Lusztig, G. {\it A survey on total positivity.} Milan J. Math., \textbf{76} (2008), 125--134.

\bibitem{lustig2} Lusztig, G. {\it Total positivity in reductive groups,} Lie theory and geometry: in honor of Bertram
Kostant (Boston) (G. I. Lehrer, ed.), Progress in Mathematics, \textbf{123} (1994), 531--568. Birkh\"auser, Boston. 

\bibitem{lustig3} Lusztig, G. {\it Total positivity in partial flag manifolds,} Representation Theory \textbf{2} 
(1998), 70--78. 

\bibitem{malan}  Malanyuk, T. M. {\it A class of exact solutions of the Kadomtsev-Petviashvili equation.} 
 Russian Math. Surveys \textbf{46} (1991), no. 3, 225--227. 

\bibitem{p} Perron, O. {\it Zur theorie der matrizen.} Math. Annalen \textbf{64} (1907), 248--263. 

\bibitem{pincus} Pinkus, A., {\it Totally positive matrices,}  Cambridge Tracts in Mathematics, \textbf{181}. Cambridge
University Press, Cambridge, 2010. xii+182 pp. ISBN: 978-0-521-19408-2.

\bibitem{post} Postnikov A., {\it Total positivity, Grassmannians, and networks.} Preprint  https://arxiv.org/abs/math/0609764

\bibitem{schon} Schoenberg, I.J. {\it \"Uber variationsvermindernde lineare Transformationen,} Math. Z. \textbf{32} (1930), 
321--322. 

\bibitem{schon2} Schoenberg, I.J., {\it Some analytical aspects of the problem of smoothing,} in: Courant Anniversary Volume "Studies and Essays", New York, 1948, pp. 351--370.

\bibitem{talaska} Talaska, K., {\it Combinatorial formulas for} \_I{\it-coordinates in a totally
nonnegative Grassmannian,} J. Combin. Theory, Series A \textbf{118} (2011), 58--66. 

\bibitem{thoma} Thoma, E. {\it Die unzerlegbaren, positiv-definiten Klassenfunktionen der abz\"ahlbar unendlichen, symmetrischen Gruppe,}  Math. Z. \textbf{85} (1964), 40--61.

\bibitem{vershik1} Vershik, A. M.; Kerov, S. V. {\it Characters and factor-representations of the infinite unitary group.}  
Soviet Math. Dokl. \textbf{26} (1982), no. 3, 570--574.
 

\bibitem{vershik2}  Vershik, A. M.; Kerov, S. V. {\it Characters and factor representations of the infinite symmetric group.}  
Soviet Math. Dokl. \textbf{23} (1981), no. 2, 389--392. 

\bibitem{voic} Voiculescu, D. {\it Repr\'esentations factorielles de type II1 de $U(\infty)$.} 
J. Math. Pures Appl. (9) \textbf{55} (1976), no. 1, 1--20. 

\bibitem{watson} Watson, G.N., {\it A treatise on the theory of Bessel functions (2nd. ed.).}  Vol. 1,   Cambridge University Press, 1966. 

\end{thebibliography}
\end{document}